%
%
%
%
%
\RequirePackage{fix-cm}
\documentclass[smallcondensed]{svjour3}       
\smartqed  
\usepackage{amsmath} 
\usepackage{verbatim}
\usepackage{amssymb, float,empheq}
\usepackage{color}
\usepackage{graphicx,epsfig}
\graphicspath{{figures/}}
\usepackage[applemac]{inputenc}
\usepackage{graphicx}
%
%
%
%
%

\def\beq{\begin{equation}}
\def\eeq{\end{equation}}
\def\baq{\begin{eqnarray}}
\def\eaq{\end{eqnarray}}
\def\baqn{\begin{eqnarray*}}
\def\eaqn{\end{eqnarray*}}

\newcommand{\R}{\mathbb{R}}

\newcommand{\ball}{\mathbb{B}}

\begin{document}

\title { R-Continuity with Applications to Convergence Analysis of Tikhonov Regularization and DC Programming
}


\author{     B. K. Le 
}

\institute{
B. K. Le \at Optimization Research Group, Faculty of Mathematics and Statistics, Ton Duc Thang University, Ho Chi Minh City, Vietnam
              (lebakhiet@tdtu.edu.vn)
}
\date{}

\maketitle

\abstract{In the paper, we study the convergence analysis of Tikhonov regularization in finding a zero of a maximal monotone operator using the  notion of {$R$-continuity}.  Applications to  convex minimization and DC programming are provided.}

 \keywords{Convex programming, maximal monotone operators, proximal point algorithm, regularization, DC programming}
\section{Introduction}\label{sec1}
Our aim is to provide a new way to find  $x\in H$ satisfying
\beq\label{main}
0\in Ax,
\eeq
where $H$ is a given real Hilbert space and $A: H \rightrightarrows H$ is a maximal monotone operator.
It is one of the fundamental {problems} in convex optimization which has been extensively studied (see, e. g., \cite{Alvarez,Attouch,Attouch1,Chen,Cominetti,Eckstein,Lions,Martinet70,Martinet72,Nesterov,Nesterovb,Passty,Rockafellar,Solodov,Tikhonov} and the references therein). Indeed it is known that minimizing a proper convex lower semicontinuous function $f: H \to \mathbb{R}\cup \{+\infty\}$ can be reduced to the {problem}
\beq
0\in \partial f(x).
\eeq
The proximal point algorithm \cite{Martinet70,Martinet72,Rockafellar} is one of well-known methods 
 which is given by
\beq
x_0\in H, \;\;x_{k+1}=J_{\gamma A}x_k, \;\; k=1,2,3\ldots
\eeq
for some $\gamma>0$ where $J_{\gamma A}:=(Id+\gamma A)^{-1}$ denotes the resolvent of $A$. 
Weak convergence of the proximal point algorithm follows by the firm non-expansiveness of the resolvent of $A$ and Opial's Lemma \cite{Eckstein,Rockafellar}. Strong convergence can be obtained by combining  with the Krasnoselskii--Mann iteration scheme or Halpern's procedure (see, e.g., \cite{Solodov,Nakajo,Wittmann}). Linear convergence of the algorithm is an attractive property, which was confirmed by  Rockafellar \cite{Rockafellar} when $A$ is strongly monotone or more weakly, the inverse of $A$ is Lipschitz continuous at zero. However,  the Lipschitz continuity of the inverse of $A$ at zero in Rockafellar's sense (see \cite{Rockafellar,Tao}) is quite restrictive since it requires that the inverse of $A$ at zero is singleton, i. e., the problem must have a unique solution, which is not satisfied in many applications. 

Inspired by the Rockafellar's idea, in this paper we establish a new way to solve the problem  under only the {$R$-continuity} of  the inverse of $A$ at zero in the sense of set-valued analysis. It is known that the unique solution $x_\varepsilon$ of the Tikhonov regularized problem
\beq\label{regul}
0\in (A+\varepsilon Id)x_\varepsilon,
\eeq
converges to the least-norm solution  of (\ref{main}) as $\varepsilon>0$ tends to zero \cite{Cominetti,Tikhonov}.
 However, under   the {truncated $R$-continuity} of  the inverse of $A$ at zero with its explicit continuity modulus function, we can estimate {how fast is this convergence}. It helps us to decide whether we should use the regularization. The assumption, which is automatically  satisfied when $A$ is a maximal monotone operator, has a closed connection with the notion of  metric regularity (see, e. g., \cite{Bosch,Henrion,Hoffman}).
In addition, we  show that when $A$ is not necessarily monotone, the  {$R$-continuity} of the inverse of $A$ at zero is also important  in the convergence analysis of DC programming for a large class of functions.  

The paper is organized as follows.  First we recall some definitions and useful results concerning  operators in Section 2. The convergence analysis of Tikhonov regularization  is provided in Section 3.  Applications to  convex  and DC programmings are given in  Section 4. Finally, some conclusions end the paper in Section 5.

\section{Notations and preliminaries}

Let $H$ be a given real Hilbert space with its norm $\Vert \cdot \Vert$  and scalar product $\langle \cdot,\cdot \rangle$. The closed unit ball is denoted by $\ball$.
Let be given a closed, convex set $K\subset H$. The distance and the projection from a point $s$ to $K$ are defined respectively by 
$${ d}(s,K):=\inf_{x\in K} \|s-x\|, \;\;{\rm proj}_K(s):=x \in K \;\;{\rm such \;that \;} { d}(s,K)= \|s-x\|.$$
The least-norm element of $K$ is  defined by 
$ {\rm proj}_K(0).$\\

\noindent The domain, the range and the graph of a set-valued mapping $\mathcal{A}: {H}\rightrightarrows {H}$ are defined respectively by
$${\rm dom}(\mathcal{A})=\{x\in {H}:\;\mathcal{A}(x)\neq \emptyset\},\;\;{\rm rge}(\mathcal{A})=\displaystyle\bigcup_{x\in{H}}\mathcal{A}(x)\;\;$$
 and
 $$\;\;{\rm gph}(\mathcal{A})=\{(x,y): x\in{H}, y\in \mathcal{A}(x)\}.$$

\noindent {It is called \textit{monotone} provided
$$
\langle x^*-y^*,x-y \rangle \ge 0 \;\;\forall \;x, y\in H, x^*\in \mathcal{A}(x) \;{\rm and}\;y^*\in \mathcal{A}(y).
$$
In addition, it is called \textit{maximal monotone} if there is no monotone operator $\mathcal{A}'$ such that the graph of $\mathcal{A}$ is strictly contained in the graph of $\mathcal{A}'$.
We have the following classical result (see, e. g., \cite{ac,Brezis}).
\begin{proposition}\label{strwe}
Let $\mathcal{A}: {H}\rightrightarrows {H}$ be a maximal monotone operator. Then the graph of $\mathcal{A}$ is strongly-weakly closed, i. e., if  $(x_n)$ converges strongly  to some $x$,  $(y_n)$ converges weakly to some $y$  and $y_n\in \mathcal{A}(x_n)$ then $y\in \mathcal{A}(x)$.
\end{proposition}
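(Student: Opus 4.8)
The plan is to reduce the closedness statement to the defining maximality property. Although Proposition~\ref{strwe} states maximality through the impossibility of strictly enlarging the graph, the convenient equivalent form is the following: a pair $(u,v)$ already lies in ${\rm gph}(\mathcal{A})$ as soon as it is monotonically related to the whole graph, i.e. as soon as $\langle v-w,u-z\rangle\ge 0$ for every $z\in H$ and every $w\in \mathcal{A}(z)$. Indeed, if such a pair failed to belong to ${\rm gph}(\mathcal{A})$, adjoining it would produce a monotone operator whose graph strictly contains that of $\mathcal{A}$, contradicting maximality. Hence, to obtain $y\in \mathcal{A}(x)$ it suffices to verify the single scalar inequality $\langle y-w,x-z\rangle\ge 0$ for an arbitrary fixed pair $(z,w)$ with $w\in \mathcal{A}(z)$.

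First I would write down the monotonicity inequality supplied by the hypotheses. Since $y_n\in \mathcal{A}(x_n)$ and $w\in \mathcal{A}(z)$, monotonicity of $\mathcal{A}$ gives $\langle y_n-w,x_n-z\rangle\ge 0$ for every $n$. The entire argument then amounts to passing to the limit in this inequality and checking that the limit equals $\langle y-w,x-z\rangle$, after which the required inequality $\langle y-w,x-z\rangle\ge 0$ follows immediately.

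The only delicate point is the cross term $\langle y_n,x_n\rangle$ obtained when expanding $\langle y_n-w,x_n-z\rangle$, since it pairs a weakly convergent factor with a strongly convergent one. I would treat it with the standard weak-times-strong lemma: decompose $\langle y_n,x_n\rangle-\langle y,x\rangle=\langle y_n,x_n-x\rangle+\langle y_n-y,x\rangle$, bound the first summand by $\Vert y_n\Vert\,\Vert x_n-x\Vert$ using the norm-boundedness of the weakly convergent sequence $(y_n)$ (uniform boundedness principle) together with $\Vert x_n-x\Vert\to 0$, and send the second summand to $0$ by weak convergence. The remaining terms in the expansion converge by pure weak convergence ($\langle y_n,z\rangle\to\langle y,z\rangle$), by pure strong convergence ($\langle w,x_n\rangle\to\langle w,x\rangle$), or are constant in $n$. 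Consequently $\langle y_n-w,x_n-z\rangle\to\langle y-w,x-z\rangle$, forcing $\langle y-w,x-z\rangle\ge 0$.

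Since the pair $(z,w)\in {\rm gph}(\mathcal{A})$ was arbitrary, the maximality characterization recalled above yields $y\in \mathcal{A}(x)$, which is precisely the assertion. The main obstacle is exactly the mixed-convergence term $\langle y_n,x_n\rangle$; once the boundedness of the weakly convergent sequence is invoked, the rest is routine limit-taking, so I anticipate no further difficulty.
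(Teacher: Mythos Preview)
Your argument is correct and is exactly the classical proof one finds in the references the paper cites (Aubin--Cellina, Br\'ezis): reduce to the maximality criterion $\langle y-w,x-z\rangle\ge 0$ for all $(z,w)\in{\rm gph}(\mathcal{A})$, and pass to the limit in $\langle y_n-w,x_n-z\rangle\ge 0$ using that weak convergence of $(y_n)$ entails boundedness, so the mixed term $\langle y_n,x_n-x\rangle$ vanishes. There is nothing to compare against here, since the paper does not supply its own proof of Proposition~\ref{strwe}; it merely records the result as classical and refers the reader to \cite{ac,Brezis}.
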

%

Suppose that   $0\in {\rm dom}(\mathcal{A})$. {The following concept is an extension of the Lipschitz continuity introduced by Rockafellar \cite{Rockafellar}}. 
\begin{definition}The set-valued operator \noindent $\mathcal{A}$ is  called {truncately} {$R$-continuous} at $0$ if there exist $\sigma>0$ and a non-decreasing function $\rho: \mathbb{R}^+\to \mathbb{R}^+$ satisfying $\lim_{r\to 0^+}\rho(r)=\rho(0)=0$  such that
\beq\label{rho}
\mathcal{A}(x)\cap a\ball \subset \mathcal{A}(0)+\rho(\Vert x \Vert)\ball, \forall x\in \sigma \ball,
\eeq
where $a=\inf_{y\in \mathcal{A}(0)}\Vert y \Vert.$ The  function $\rho$  is called a continuity modulus function of $\mathcal{A}$ at zero. It is called {truncately} {$R$-Lipschitz continuous} at $0$ with modulus $c>0$ if $\rho(\Vert x \Vert)=c \Vert x \Vert$.

If the left-hand side of (\ref{rho}) is replaced by $\mathcal{A}(x)$ then $\mathcal{A}$ is called  {$R$-continuous} at $0$.
Similarly if $\rho(\Vert x \Vert)=c \Vert x \Vert$ for some $c>0$ then $\mathcal{A}$ is called {$R$-Lipschitz continuous} at $0$ with modulus $c$.
\end{definition}
\begin{remark}
\textit{ i) Our  Lipschitz continuity is strictly weaker than the Lipschitz continuity in the classical Rockafellar's sense \cite{Rockafellar} since it allows $\mathcal{A}(0)$ to be set-valued. Two definitions  {coincide} if $\mathcal{A}(0)$ is singleton. The ${ Sign}$ mapping, defined by
$$
{ Sign}(x) = \left\{
\begin{array}{lll}
\; -1 & \mbox{ if } & x < 0,\\
&&\\
\; [ -1,1 ] & \mbox{ if } & x = 0, \\
&&\\
\; 1 & \mbox{ if } & x > 0,
\end{array}\right.
$$
 is Lipschitz continuous at $0$ in our sense but  not Lipschitz continuous at $0$ in the  Rockafellar's sense.} \\

\textit{ii) If $A^{-1}$ is {$R$-Lipschitz} continuous at $0$ with modulus $c$ then $(kA)^{-1}$ is {$R$-Lipschitz} continuous at $0$  with modulus $\frac{c}{\vert k \vert}$ for any $k\in \mathbb{R}$.}
\end{remark}
\begin{proposition}\label{rela}
If $\mathcal{A}:  {H}\rightrightarrows {H}$ is maximal monotone with $0\in {\rm dom}(\mathcal{A})$ then $\mathcal{A}$ is  {truncately}  {$R$-continuous} at $0$.
\end{proposition}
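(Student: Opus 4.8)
The plan is to construct the modulus $\rho$ explicitly as the supremum of the very distances one is obliged to control, so that every requirement in the definition except the limit condition becomes immediate and the whole difficulty concentrates in proving $\rho(r)\to 0$ as $r\to 0^+$.

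Concretely, I would set
\[
\rho(r):=\sup\{\,d(x^*,\mathcal{A}(0)) : x\in r\ball,\ x^*\in \mathcal{A}(x)\cap a\ball\,\},
\]
with the convention that the supremum over the empty set is $0$. Since $\mathcal{A}(0)$ is closed and convex (being an image of a maximal monotone operator), $a=\inf_{y\in\mathcal{A}(0)}\Vert y\Vert$ is attained, and every $x^*\in a\ball$ satisfies $d(x^*,\mathcal{A}(0))\le \Vert x^*\Vert + a\le 2a$, so $\rho$ is finite and bounded by $2a$. It is non-decreasing because the supremum runs over a larger set as $r$ grows, and $\rho(0)=0$ because $x=0$ forces $x^*\in\mathcal{A}(0)$, hence $d(x^*,\mathcal{A}(0))=0$. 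With this $\rho$ the inclusion (\ref{rho}) holds for every $\sigma>0$: for $x^*\in\mathcal{A}(x)\cap a\ball$ the projection $q:=\mathrm{proj}_{\mathcal{A}(0)}(x^*)$ realizes $\Vert x^*-q\Vert=d(x^*,\mathcal{A}(0))\le\rho(\Vert x\Vert)$, so $x^*\in\mathcal{A}(0)+\rho(\Vert x\Vert)\ball$. Thus it only remains to prove $\lim_{r\to0^+}\rho(r)=\rho(0)=0$.

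For this last point I would argue by contradiction. If it fails, there are $\varepsilon>0$ and sequences $x_n\to 0$, $x_n^*\in\mathcal{A}(x_n)$ with $\Vert x_n^*\Vert\le a$ and $d(x_n^*,\mathcal{A}(0))\ge\varepsilon$ for all $n$. The bound $\Vert x_n^*\Vert\le a$ makes $(x_n^*)$ bounded, so after passing to a subsequence $x_n^*\rightharpoonup y$ weakly. Since $x_n\to 0$ strongly and $x_n^*\in\mathcal{A}(x_n)$, Proposition \ref{strwe} yields $y\in\mathcal{A}(0)$, whence $\Vert y\Vert\ge a$; on the other hand weak lower semicontinuity of the norm gives $\Vert y\Vert\le\liminf\Vert x_n^*\Vert\le a$. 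Hence $\Vert y\Vert=a$ and $\Vert x_n^*\Vert\to\Vert y\Vert$.

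The decisive observation is that in a Hilbert space weak convergence together with convergence of the norms forces strong convergence (the Radon--Riesz property), so $x_n^*\to y$ strongly. Then $d(x_n^*,\mathcal{A}(0))\le\Vert x_n^*-y\Vert\to 0$, contradicting $d(x_n^*,\mathcal{A}(0))\ge\varepsilon$. I expect this upgrade from weak to strong convergence to be the crux of the argument, and it is precisely here that the truncation by $a\ball$ in the definition is indispensable: without the constraint $\Vert x_n^*\Vert\le a$ the weak limit $y$ could have norm strictly larger than $a$, the norms would fail to converge, and one could not exclude a genuinely weakly-but-not-strongly convergent sequence staying a fixed distance $\varepsilon$ away from $\mathcal{A}(0)$.
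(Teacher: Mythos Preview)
Your proof is correct and follows essentially the same route as the paper: define $\rho$ as the supremum of the relevant distances, argue by contradiction using weak sequential compactness of $a\ball$, the strong--weak closedness of the graph (Proposition~\ref{strwe}), and weak lower semicontinuity of the norm to force $\Vert y\Vert=a$, then upgrade to strong convergence via the Radon--Riesz property. The paper's only cosmetic difference is that it argues every weak cluster point of $(y_n)$ equals the least-norm element $\tilde y$ rather than simply passing to one weakly convergent subsequence as you do.
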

\begin{proof}
Let $S:=\mathcal{A}(0)$. Suppose that $\mathcal{A}$  is not  {truncately} {$R$-continuous} at $0$. Let us define the function $\rho: \mathbb{R}^+\to \mathbb{R}^+$ as follows: $\rho(0)=0$ and   if $\sigma>0$, we let

$$\rho(\sigma)=\inf\{ \delta>0: \mathcal{A}(x) \cap  a\ball \subset \mathcal{A}(0)+\delta\ball,\;\;\forall x\in \sigma \ball\}$$
where $a=\Vert \tilde{y}\Vert$ and  $\tilde{y}$ is the least-norm element of $S$. \\

It is easy to see that $\rho$ is well-defined and non-decreasing. We will prove that 
$\lim_{\sigma\to 0^+}\rho(\sigma)=\rho(0)=0$.  Since $\rho$ is non-decreasing and bounded below by $0$, the limit $\lim_{\sigma\to 0^+}\rho(\sigma)$ exists. Suppose that $\lim_{\sigma\to 0^+}\rho(\sigma)=\delta^*>0$ then there exists  $(x_n)$ such that $x_n \downarrow 0$
$$
\mathcal{A}(x_n) \cap  a\ball \not\subset \mathcal{A}(0)+\frac{\delta^*}{2}\ball.
$$
Therefore there exists  a sequence  $(y_n)$ such that  $y_n\in \mathcal{A}(x_n)\;\cap \;a\ball$  and 
\beq\label{contrac}
y_n\notin \mathcal{A}(0)+\frac{\delta^*}{2}\ball.
\eeq

Let $y^*$ be a weak limit point of $(y_n)$. The existence of $y^*$ is assured due to the fact that $y_n \in a\ball$. Note that the graph of $\mathcal{A}$ is strongly-weakly closed (Proposition \ref{strwe}), we must have $y^*\in S$. On the other hand, since the norm function is weakly continuous we deduce that
$$
\Vert y^*\Vert\le \liminf \Vert y_n \Vert \le \limsup \Vert y_n \Vert \le a. 
$$
Thus $y^*=\tilde{y}$ and $\lim_{n\to+\infty}  \Vert y_n \Vert =a=\Vert \tilde{y} \Vert$. Since $y^*$ is an arbitrarily weak limit point of $(y_n)$, this implies that $y_n$ converges weakly to $\tilde{y}$ with  $\lim_{n\to+\infty}  \Vert y_n \Vert =\Vert \tilde{y} \Vert$.  Consequently $(y_n)$ convergs  to  $\tilde{y}$. Combining with (\ref{contrac}), we obtain a contradiction and the conclusion follows.\qed
\end{proof}
{\begin{remark}
We provide not only the truncated $R$-continuity of a maximal monotone operator but also an associated modulus function $\rho$.
\end{remark}}
Using similar arguments, one has the following result.
\begin{proposition}\label{compact}
If $\mathcal{A}:  {H}\rightrightarrows {H}$ has closed graph with compact range and $0\in {\rm dom}(\mathcal{A})$ then $\mathcal{A}$ is   { $R$-continuous} at $0$.
\end{proposition}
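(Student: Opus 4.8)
The plan is to mimic the proof of Proposition \ref{rela}, the key new ingredient being that compactness of the range lets us work with \emph{strong} subsequential limits directly, so that the closed-graph hypothesis can play the role that strong--weak closedness played in the maximal monotone case. First I would define the candidate modulus $\rho:\R^+\to\R^+$ by $\rho(0)=0$ and, for $\sigma>0$,
$$
\rho(\sigma)=\inf\{\delta>0:\mathcal{A}(x)\subset\mathcal{A}(0)+\delta\ball,\ \forall x\in\sigma\ball\}.
$$
The first thing to check is that this infimum is finite. Since ${\rm rge}(\mathcal{A})$ is compact it is bounded, say ${\rm rge}(\mathcal{A})\subset M\ball$; fixing any $y_0\in\mathcal{A}(0)$ (nonempty because $0\in{\rm dom}(\mathcal{A})$), every $z\in\mathcal{A}(x)$ satisfies $\|z-y_0\|\le M+\|y_0\|$, so $\delta=M+\|y_0\|$ is admissible for every $x$. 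Hence $\rho$ is well defined, and because enlarging $\sigma$ only enlarges the constraint set $\sigma\ball$, the function $\rho$ is non-decreasing; being bounded below by $0$, the limit $\lim_{\sigma\to0^+}\rho(\sigma)$ exists.

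The core of the argument is to show this limit is $0$, which I would do by contradiction exactly as in Proposition \ref{rela}. Suppose $\lim_{\sigma\to0^+}\rho(\sigma)=\delta^*>0$. Then one can select a sequence $(x_n)$ with $\|x_n\|\to0$ and points $y_n\in\mathcal{A}(x_n)$ escaping the enlarged target, namely
$$
y_n\notin\mathcal{A}(0)+\frac{\delta^*}{2}\ball.
$$
Here the hypotheses enter, and this is where the proof departs from the monotone case. Because $y_n\in{\rm rge}(\mathcal{A})$ and the range is compact, $(y_n)$ admits a subsequence $(y_{n_k})$ converging \emph{strongly} to some $y^*$. The pairs $(x_{n_k},y_{n_k})$ then converge strongly to $(0,y^*)$, and since the graph of $\mathcal{A}$ is closed we obtain $y^*\in\mathcal{A}(0)$. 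But then for $k$ large $y_{n_k}$ lies within distance $\tfrac{\delta^*}{2}$ of $y^*\in\mathcal{A}(0)$, i.e. $y_{n_k}\in\mathcal{A}(0)+\tfrac{\delta^*}{2}\ball$, contradicting the choice of $(y_n)$. Hence $\lim_{\sigma\to0^+}\rho(\sigma)=\rho(0)=0$, and $\rho$ serves as the required continuity modulus, establishing $R$-continuity at $0$.

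The step I expect to need the most care is the finiteness of $\rho$ in the first paragraph, since here we must dispense with the truncation $\cap\,a\ball$ that was essential in Proposition \ref{rela}. This is precisely where compactness (hence boundedness) of the range does the work: it is what allows us to upgrade from \emph{truncated} $R$-continuity to full $R$-continuity. Once a strongly convergent subsequence of $(y_n)$ is in hand the remaining estimates are routine, because compactness supplies strong convergence for free and we never need the weak-limit-plus-norm-convergence device required in the maximal monotone setting, where only weak compactness of bounded sets was available.
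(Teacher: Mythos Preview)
Your proposal is correct and follows essentially the same route as the paper: define the same modulus $\rho$, suppose $\lim_{\sigma\to0^+}\rho(\sigma)=\delta^*>0$, extract sequences $x_n\to0$ and $y_n\in\mathcal{A}(x_n)$ with $y_n\notin\mathcal{A}(0)+\tfrac{\delta^*}{2}\ball$, use compactness of the range to pass to a strongly convergent subsequence, and invoke the closed-graph hypothesis to land $y^*\in\mathcal{A}(0)$ for a contradiction. Your explicit verification that $\rho(\sigma)<\infty$ via boundedness of the range is a detail the paper leaves implicit, but otherwise the arguments coincide.
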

\begin{proof}
Suppose that $\mathcal{A}$  is not { $R$-continuous} at $0$. {We define the function $\rho$ as follows: $\rho(0)=0$ and   if $\sigma>0$, we let
$$\rho(\sigma)=\inf\{ \delta>0: \mathcal{A}(x)  \subset \mathcal{A}(0)+\delta\ball,\;\;\forall x\in \sigma \ball\}.$$
 Then we must have $\lim_{\sigma\to 0^+}\rho(\sigma)=\delta^*>0$ and  similarly as in the proof of Proposition \ref{rela}}, there exist  two sequences $(x_n)$, $(y_n)$ such that $x_n \to 0$, $y_n\in \mathcal{A}(x_n)$  and 
\beq\label{closed}
y_n\notin \mathcal{A}(0)+\frac{\delta^*}{2}\ball.
\eeq
Since $\mathcal{A}$ has compact range, there exists a convergent subsequence $(y_{n_k})$. Let $y^*=\lim_{k\to +\infty}y_{n_k}$. Then $y^*\in  \mathcal{A}(0)$ because $\mathcal{A}$ has closed graph. However it is a contradiction to $(\ref{closed})$ and the conclusion follows. \qed
\end{proof}
{\begin{remark}
Note that $A^{-1}:=\mathcal{A}$ has closed graph with compact range if and only if $A$ has closed graph with compact domain. For example, $A=f+N_K$ where $f$ is a continuous function and $K$ is a convex compact set. This is the case of minimizing  $C^1$ functions on convex compact sets, as we can see in the Corollary \ref{exam} when we deal with DC programming. 
\end{remark}}
A linear mapping $B: H \to H$ is called \textit{positive semidefinite} if
$$
\langle Bx, x \rangle \ge 0,\;\; \forall \;x\in H.
$$
The next  statement is classical (see, e. g., \cite{abc}). However we provide here   an explicit  constant in the estimation. 
\begin{lemma}\label{lpeigen}
Let $B\in \mathbb{R}^{n\times n}$ be a symmetric positive semidefinite matrix. Then for all $x\in {\rm rge}(B)$, we have
$$
\langle Bx, x \rangle\ge k\Vert x \Vert^2,
$$
and thus 
\beq
\Vert Bx \Vert \ge k \Vert x \Vert,
\eeq
where $k$ is the least positive eigenvalue of $B$.
\end{lemma}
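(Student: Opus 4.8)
The plan is to invoke the spectral theorem for symmetric matrices and then read off both inequalities directly from the eigendecomposition. Since $B$ is symmetric, there is an orthonormal basis $e_1,\dots,e_n$ of $\R^n$ consisting of eigenvectors of $B$, with associated eigenvalues $\lambda_1\ge \dots \ge \lambda_n\ge 0$, the nonnegativity being exactly the positive semidefiniteness of $B$. Relabelling if necessary, let $\lambda_1\ge \dots\ge \lambda_r>0$ be the strictly positive eigenvalues and $\lambda_{r+1}=\dots=\lambda_n=0$, so that $k=\lambda_r$ is the least positive eigenvalue.

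The key structural step is to identify the range of $B$ with $\mathrm{span}\{e_1,\dots,e_r\}$. Indeed, $Be_i=\lambda_i e_i$ shows that each $e_i$ with $i\le r$ lies in ${\rm rge}(B)$ (as $e_i=B(e_i/\lambda_i)$), while $e_i\in \ker B$ for $i>r$. Since $B$ is symmetric we have $\ker B=({\rm rge}(B))^{\perp}$, and the two orthogonal spans have complementary dimensions, so equality holds. Consequently any $x\in{\rm rge}(B)$ can be written as $x=\sum_{i=1}^r c_i e_i$.

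With this decomposition the first inequality is immediate: by orthonormality, $\Vert x\Vert^2=\sum_{i=1}^r c_i^2$ and $\langle Bx,x\rangle=\sum_{i=1}^r \lambda_i c_i^2$, and since $\lambda_i\ge \lambda_r=k$ for every $i\le r$ we obtain $\langle Bx,x\rangle \ge k\sum_{i=1}^r c_i^2=k\Vert x\Vert^2$. For the second inequality I apply the Cauchy--Schwarz inequality $\langle Bx,x\rangle\le \Vert Bx\Vert\,\Vert x\Vert$; combining it with the bound just proved gives $k\Vert x\Vert^2\le \Vert Bx\Vert\,\Vert x\Vert$, whence $\Vert Bx\Vert\ge k\Vert x\Vert$ after dividing by $\Vert x\Vert$, the case $x=0$ being trivial.

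The computation is entirely routine once the eigendecomposition is available; the only point demanding a little care is the identification ${\rm rge}(B)=\mathrm{span}\{e_1,\dots,e_r\}$, that is, that the range is spanned precisely by the eigenvectors with positive eigenvalues. This rests on the symmetry of $B$, through which kernel and range become orthogonal complements, and it is exactly here that the hypothesis ``$k$ is the least \emph{positive} eigenvalue'' (rather than the least eigenvalue) is indispensable: restricting to ${\rm rge}(B)$ excludes the zero eigenvalues, which would otherwise destroy the lower bound.
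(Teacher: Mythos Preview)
Your proof is correct and follows essentially the same approach as the paper: both invoke the spectral theorem, identify ${\rm rge}(B)$ with the span of eigenvectors of positive eigenvalue, and read off the quadratic lower bound from the eigenvalues. The paper phrases this via the matrix factorization $B=O^TDO$ and the observation that $x\in{\rm rge}(B)$ implies $Ox\in{\rm rge}(D)$, whereas you work directly in the eigenbasis and are more explicit about the Cauchy--Schwarz step for the second inequality, but the substance is identical.
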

\begin{proof}
Since $B$ is symmetric positive semidefinite matrix, we can write $B=O^TDO$ where $D$ is a diagonal matrix with nonnegative eigenvalues and $O$ is {an} orthogonal matrix, i. e., $OO^T=Id$. If $x\in {\rm rge}(B)$ then $x=By=O^TDOy$ for some $y\in\mathbb{R}^n$ and $Ox=OO^TDOy=DOy\in {\rm rge}(D)$. Thus
$$
\langle Bx, x \rangle= \langle O^TDOx, x \rangle= \langle DOx, Ox\rangle\ge k \Vert Ox \Vert^2=k\Vert x \Vert^2,
$$
where $k$ is the least positive eigenvalue of $D$ and also of $B$.  \qed
\end{proof}
The following result is a consequence of Lemma \ref{lpeigen}.
\begin{lemma}\label{matr}
Let $B\in \mathbb{R}^{n\times n}$ be a symmetric matrix. Then ${\rm ker}(B)={\rm ker}(B^2)$, \\${\rm rge}(B^2)\subset {\rm rge}(B)$ and for all $x\in {\rm rge}(B^2)$, we have
\beq
\Vert Bx \Vert \ge \frac{k}{\Vert B\Vert} \Vert x \Vert,
\eeq
where $k$ is the least positive eigenvalue of $B^2$ and {\rm ker}(B) denotes the kernel of $B$.
\end{lemma}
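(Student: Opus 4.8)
The plan is to derive all three claims from Lemma \ref{lpeigen} applied to the auxiliary matrix $B^2$, after first disposing of the kernel and range statements by elementary manipulations that use only the symmetry $B^T=B$. First I would record the kernel identity. The inclusion ${\rm ker}(B)\subset {\rm ker}(B^2)$ is immediate, since $Bx=0$ forces $B^2x=B(Bx)=0$. For the reverse inclusion I would use symmetry crucially: if $B^2x=0$, then
$$
\Vert Bx \Vert^2=\langle Bx, Bx\rangle=\langle B^TBx, x\rangle=\langle B^2 x, x\rangle=0,
$$
so $Bx=0$. This yields ${\rm ker}(B)={\rm ker}(B^2)$. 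The range inclusion ${\rm rge}(B^2)\subset {\rm rge}(B)$ is trivial, because every element of ${\rm rge}(B^2)$ has the form $B^2x=B(Bx)\in {\rm rge}(B)$.

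For the quantitative estimate I would first observe that $B^2$ is symmetric and positive semidefinite, the latter because $\langle B^2 x, x\rangle=\Vert Bx\Vert^2\ge 0$ for all $x$. Hence Lemma \ref{lpeigen} applies to $B^2$ in place of $B$: for every $x\in {\rm rge}(B^2)$,
$$
\Vert B^2 x \Vert \ge k \Vert x \Vert,
$$
where $k$ is the least positive eigenvalue of $B^2$. I would then combine this with the submultiplicative bound $\Vert B^2 x\Vert=\Vert B(Bx)\Vert\le \Vert B\Vert\,\Vert Bx\Vert$ to obtain
$$
\Vert Bx \Vert \ge \frac{\Vert B^2 x \Vert}{\Vert B \Vert} \ge \frac{k}{\Vert B\Vert}\Vert x \Vert,
$$
which is exactly the desired inequality. (Here the case $B=0$ is vacuous, since then ${\rm rge}(B^2)=\{0\}$ and $B^2$ has no positive eigenvalue.)

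I do not expect a serious obstacle: the argument is short once one notices that $B^2$ is the right object to feed into Lemma \ref{lpeigen}. The only two points demanding care are, first, the reverse kernel inclusion ${\rm ker}(B^2)\subset {\rm ker}(B)$, where the symmetry of $B$ is what converts $\langle B^2x,x\rangle=0$ into $\Vert Bx\Vert=0$; and second, checking that the membership hypothesis matches, namely that applying Lemma \ref{lpeigen} to $B^2$ indeed requires $x\in{\rm rge}(B^2)$, precisely the hypothesis imposed in the statement. The passage from the estimate for $B^2$ to the one for $B$ via $\Vert B^2x\Vert\le\Vert B\Vert\,\Vert Bx\Vert$ is then purely routine.
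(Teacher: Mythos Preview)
Your proof is correct and follows essentially the same route as the paper: apply Lemma \ref{lpeigen} to the symmetric positive semidefinite matrix $B^2$ for $x\in{\rm rge}(B^2)$, then pass to $B$ via the operator-norm bound. The only cosmetic difference is that the paper invokes the inner-product form $k\Vert x\Vert^2\le\langle B^2x,x\rangle\le\Vert B\Vert\,\Vert Bx\Vert\,\Vert x\Vert$, whereas you invoke the derived norm form $\Vert B^2x\Vert\ge k\Vert x\Vert$ together with $\Vert B^2x\Vert\le\Vert B\Vert\,\Vert Bx\Vert$; both lead to the same inequality.
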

\begin{proof}
It is easy to see that ${\rm ker}(B)={\rm ker}(B^2), {\rm rge}(B^2)\subset {\rm rge}(B)$ and $B^2$ is a symmetric positive semidefinite matrix. Using Lemma \ref{lpeigen}, for all $x\in {\rm rge}(B^2)$ one has
\beq
k \Vert x \Vert^2\le \langle B^2x, x \rangle\le  \Vert B\Vert \Vert Bx\Vert \Vert x \Vert,
\eeq
and the conclusion follows. \qed
\end{proof}
The following propositions provides some cases such that $A^{-1}$ is {$R$-Lipschitz} continuous at zero (see also \cite{Hoffman} for the Hoffman constant). They can be used to estimate the error for the linear system $Bx=C$ when $B$ is not an invertible matrix. 
\begin{proposition}\label{lipm}
Let  $A(x):=Bx-C, \forall x\in \mathbb{R}^n$ where $B\in \mathbb{R}^{n\times n}$ is a symmetric, positive semi-definite matrix, $C$ is some vector  in  $\mathbb{R}^n$. Suppose that $S:=A^{-1}(0)\neq \emptyset$. Then $A^{-1}$ is {$R$-Lipschitz} continuous at zero with modulus  $\frac{1}{k}$, where $k$ is the least positive eigenvalue of $B$.
\end{proposition}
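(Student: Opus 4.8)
The plan is to unwind the definition of $R$-Lipschitz continuity and reduce the claim to a single distance estimate, which Lemma \ref{lpeigen} then delivers. Writing $\mathcal{A}:=A^{-1}$, the assertion ``$A^{-1}$ is $R$-Lipschitz continuous at $0$ with modulus $1/k$'' means that for some $\sigma>0$ and all $y\in \sigma\ball$ one has $A^{-1}(y)\subset A^{-1}(0)+\frac{1}{k}\Vert y\Vert\,\ball$. Since $A(x)=Bx-C$, we have $A^{-1}(y)=\{x:\,Bx=C+y\}$, and in particular $A^{-1}(0)=S=\{x:\,Bx=C\}$, which is nonempty by hypothesis (so $0\in{\rm dom}(\mathcal{A})$). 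Thus it suffices to prove that every $x$ with $Bx=C+y$ satisfies $d(x,S)\le \frac{1}{k}\Vert y\Vert$: the projection of $x$ onto the closed convex (indeed affine) set $S$ then supplies the required element of $A^{-1}(0)$. As everything is linear the estimate will hold for all $y$, so the restriction to a neighbourhood $\sigma\ball$ is automatic and any $\sigma>0$ works.

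So I would fix such an $x$ and pick any particular solution $x^{*}\in S$, giving $B(x-x^{*})=(C+y)-C=y$. Because $B$ is symmetric we have the orthogonal decomposition $\mathbb{R}^{n}={\rm rge}(B)\oplus{\rm ker}(B)$ with ${\rm rge}(B)=({\rm ker}\,B)^{\perp}$, and $S=x^{*}+{\rm ker}(B)$. Letting $P$ denote the orthogonal projection onto ${\rm rge}(B)$, the distance from $x$ to the affine subspace $S$ is then exactly $d(x,S)=\Vert P(x-x^{*})\Vert$, since the component of $x-x^{*}$ lying in ${\rm ker}(B)$ is precisely what can be subtracted off to reach $S$.

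The key observation is that this kernel component is invisible to $B$: setting $v:=P(x-x^{*})\in{\rm rge}(B)$, we have $(x-x^{*})-v\in{\rm ker}(B)$, hence $Bv=B(x-x^{*})=y$. Applying Lemma \ref{lpeigen} to $v\in{\rm rge}(B)$ yields $\Vert y\Vert=\Vert Bv\Vert\ge k\Vert v\Vert=k\,d(x,S)$, i.e. $d(x,S)\le \frac{1}{k}\Vert y\Vert$, which is the desired inclusion. The one point requiring care, and the real crux of the argument, is exactly this reduction to the range component: the naive bound $\Vert B(x-x^{*})\Vert\ge k\Vert x-x^{*}\Vert$ is \emph{false}, because $x-x^{*}$ need not lie in ${\rm rge}(B)$ (indeed $S$ may be a full affine subspace, so $A^{-1}(0)$ is genuinely set-valued, which is the whole reason we work with the relaxed $R$-continuity rather than Rockafellar's single-valued notion). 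It is only after projecting onto ${\rm rge}(B)=({\rm ker}\,B)^{\perp}$ that the least-positive-eigenvalue estimate of Lemma \ref{lpeigen} becomes applicable; everything else is routine.
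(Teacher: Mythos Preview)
Your proof is correct and follows essentially the same route as the paper's: both use the orthogonal decomposition $\mathbb{R}^{n}={\rm rge}(B)\oplus{\rm ker}(B)$, identify $S$ as a translate of ${\rm ker}(B)$, and apply Lemma~\ref{lpeigen} to the range component of the difference. The only cosmetic difference is that the paper fixes the canonical range representatives $x_r,y_r\in{\rm rge}(B)$ of $S$ and $A^{-1}(y)$ and bounds $\Vert y_r-x_r\Vert$, whereas you take an arbitrary $x\in A^{-1}(y)$, $x^{*}\in S$ and project $x-x^{*}$ onto ${\rm rge}(B)$; your $v=P(x-x^{*})$ is exactly the paper's $y_r-x_r$.
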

\begin{proof}
Since $S\neq \emptyset$, one has $S={\rm ker}(B)+x_r$ where  $x_r$ is the unique element of  ${\rm rge}(B)$ such that $Bx_r=C$.
Let $y\in \mathbb{R}^n$, if $A^{-1}(y)$ is empty then the conclusion holds trivially.  If $A^{-1}(y)$ is nonempty, i. e., $y\in {\rm rge}(B)$ then  
$A^{-1}(y)={\rm ker}(B)+y_r$ 
where $y_r$ is the unique element of  ${\rm rge}(B)$ such that $By_r=C+y$. Since $\Vert y_r-x_r\Vert\le \frac{1}{k}\Vert B(y_r-x_r) \Vert = \frac{1}{k} \Vert y \Vert$  (see Lemma \ref{lpeigen}), we conclude that $A^{-1}$ is {$R$-Lipschitz} continuous at zero with modulus $\frac{1}{k}$.\qed
\end{proof}

\begin{proposition}\label{lip}
Let  $A(x):=Bx-C, \forall x\in \mathbb{R}^n$ where $B\in \mathbb{R}^{n\times n}$ is a symmetric matrix and $C$ is some vector  in  $\mathbb{R}^n$. Suppose that $S:=A^{-1}(0)\neq \emptyset$. Then $A^{-1}$ is {$R$-Lipschitz} continuous at zero with modulus $\frac{\Vert B \Vert}{k}$, where $k$ is the least positive eigenvalue of $B^2$.
\end{proposition}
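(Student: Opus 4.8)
The plan is to mirror the proof of Proposition \ref{lipm}, the only substantive change being that since $B$ is now merely symmetric (so its eigenvalues may be negative), I would invoke Lemma \ref{matr} in place of Lemma \ref{lpeigen}. The preliminary fact I would record first is that, because $B$ is symmetric, $\mathbb{R}^n={\rm ker}(B)\oplus{\rm rge}(B)$ is an orthogonal decomposition, and moreover ${\rm rge}(B^2)={\rm rge}(B)$: the inclusion ${\rm rge}(B^2)\subset{\rm rge}(B)$ is supplied by Lemma \ref{matr}, and equality then follows by counting dimensions through the rank--nullity theorem together with ${\rm ker}(B)={\rm ker}(B^2)$.

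Next I would describe the solution sets exactly as in Proposition \ref{lipm}. Since $S=A^{-1}(0)\neq\emptyset$ we have $C\in{\rm rge}(B)$, and the restriction of $B$ to ${\rm rge}(B)$ is a bijection onto ${\rm rge}(B)$; hence there is a unique $x_r\in{\rm rge}(B)$ with $Bx_r=C$, and $S={\rm ker}(B)+x_r$. For a given $y\in\mathbb{R}^n$, if $A^{-1}(y)=\emptyset$ the required inclusion holds trivially. Otherwise $C+y\in{\rm rge}(B)$, so $y\in{\rm rge}(B)$, and $A^{-1}(y)={\rm ker}(B)+y_r$, where $y_r$ is the unique element of ${\rm rge}(B)$ with $By_r=C+y$.

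The heart of the estimate is then the vector $y_r-x_r$. It lies in ${\rm rge}(B)={\rm rge}(B^2)$ and satisfies $B(y_r-x_r)=(C+y)-C=y$. Applying Lemma \ref{matr} to $y_r-x_r$ yields
\beq
\Vert y_r-x_r\Vert \le \frac{\Vert B\Vert}{k}\,\Vert B(y_r-x_r)\Vert = \frac{\Vert B\Vert}{k}\,\Vert y\Vert,
\eeq
where $k$ is the least positive eigenvalue of $B^2$. Since an arbitrary point of $A^{-1}(y)$ differs from the corresponding point of $S$ (obtained by keeping the same ${\rm ker}(B)$-component) exactly by $y_r-x_r$, this gives $A^{-1}(y)\subset A^{-1}(0)+\frac{\Vert B\Vert}{k}\Vert y\Vert\,\ball$, which is precisely $R$-Lipschitz continuity at zero with modulus $\frac{\Vert B\Vert}{k}$.

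I do not anticipate a serious obstacle, as the argument is structurally identical to that of Proposition \ref{lipm}. The one point demanding care is the passage from $B$ to $B^2$: one must confirm that the difference vector $y_r-x_r$ genuinely belongs to ${\rm rge}(B^2)$ so that Lemma \ref{matr} is applicable, and this is exactly why establishing ${\rm rge}(B)={\rm rge}(B^2)$ at the outset is the key enabling step.
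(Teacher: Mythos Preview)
Your proposal is correct and follows essentially the same approach as the paper's proof: both write the affine solution sets as $\ker(B)+x_r$ and $\ker(B)+y_r$ and then apply Lemma~\ref{matr} to the difference $y_r-x_r$. The only cosmetic distinction is that the paper places $x_r,y_r$ directly in ${\rm rge}(B^2)$ via orthogonal projection, whereas you place them in ${\rm rge}(B)$ and first establish ${\rm rge}(B)={\rm rge}(B^2)$; your extra step is arguably a cleaner justification of why Lemma~\ref{matr} is applicable.
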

\begin{proof}
Let $x_1, x_2\in S$ then $x_1-x_2\in {\rm ker}(B)={\rm ker}(B^2)$, i. e., $x_1$ and $x_2$ has the same projection onto ${\rm rge}(B^2)$, which is called $x_r$ and one has $Bx_r=C$. Thus $S=x_r+{\rm ker}(B)$. 

Next we prove that $A^{-1}$ is {$R$-Lipschitz} continuous at zero. Let $y\in \mathbb{R}^n$, if $A^{-1}(y)$ is empty then the conclusion is trivial.  If $A^{-1}(y)$ is nonempty, i. e., $y\in {\rm rge}(B)$ then  
$A^{-1}(y)={\rm ker}(B)+y_r$ 
where $y_r$ is the unique element of  ${\rm rge}(B^2)$ such that $By_r=C+y$. Since $\Vert y_r-x_r\Vert\le \frac{\Vert B \Vert}{k}\Vert B(y_r-x_r) \Vert = \frac{\Vert B \Vert}{k} \Vert y \Vert$  (see Lemma \ref{matr}), the conclusion follows.\qed
\end{proof}
{In the sequel, we recall a  general class of convex continuous differentiable functions with quadratic functional growth, that allows us to obtain linear convergence rates using first order methods \cite{Necoara}. We show that this class is  smaller than our proposed class.
\begin{definition}
\textit{Continuously differentiable function $f$ has a quadratic functional
growth on $H$ if there exists a constant $\kappa_f>0$ such that for any $x\in H$ and $x^*={\rm proj}_S(x)$, we have }
\beq\label{quad}
f(x)-f^*\ge \frac{\kappa_f}{2}\Vert x-x^*\Vert^2,
\eeq
where $S:={\rm argmin}_H f $ and $f^*=f(x^*)$.
\end{definition}
\begin{proposition}\label{propqu}
If the convex continuous differentiable function $f: H\to \R$ has a quadratic functional
growth on $H$, then $(\nabla f)^{-1}$ is $R$-Lipschitz continuous at zero.
\end{proposition}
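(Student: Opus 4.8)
The plan is to unwind the definition of $R$-Lipschitz continuity for the inverse operator $\mathcal{A}:=(\nabla f)^{-1}$ and reduce the claim to a single distance-to-solution bound. First I would identify the sets involved. Since $f$ is convex, $\nabla f(x)=0$ holds if and only if $x$ is a global minimizer, so $\mathcal{A}(0)=(\nabla f)^{-1}(0)=S$, where $S:={\rm argmin}_H f$; the quadratic functional growth hypothesis presupposes $S\neq\emptyset$ (so that ${\rm proj}_S$ is meaningful), and $S$ is closed and convex. For a general $y$ we have $\mathcal{A}(y)=\{x:\nabla f(x)=y\}$. Consequently, the inclusion required for $R$-Lipschitz continuity, namely $\mathcal{A}(y)\subset \mathcal{A}(0)+c\Vert y\Vert\,\ball$, is equivalent to the estimate
$$
d(x,S)\le c\,\Vert \nabla f(x)\Vert \quad \text{for every } x\in H.
$$
Indeed, substituting $y=\nabla f(x)$ for $x\in \mathcal{A}(y)$ turns this estimate into the desired inclusion, with the radius $\sigma$ arbitrary since the bound will be global.

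The core of the argument is a two-sided sandwich on $f(x)-f^*$. Writing $x^*:={\rm proj}_S(x)$, so that $x^*\in S$ and $\Vert x-x^*\Vert=d(x,S)$, the growth hypothesis (\ref{quad}) provides the lower bound $f(x)-f^*\ge \tfrac{\kappa_f}{2}\Vert x-x^*\Vert^2$. For the matching upper bound I would invoke the gradient inequality for the convex function $f$ at the point $x$, evaluated at $z=x^*$: from $f(x^*)\ge f(x)+\langle \nabla f(x),\,x^*-x\rangle$ together with the Cauchy--Schwarz inequality one obtains
$$
f(x)-f^*\le \langle \nabla f(x),\,x-x^*\rangle \le \Vert \nabla f(x)\Vert\,\Vert x-x^*\Vert.
$$
Chaining the two inequalities and cancelling one factor of $\Vert x-x^*\Vert$ (the case $x\in S$ being trivial) yields $\tfrac{\kappa_f}{2}\Vert x-x^*\Vert\le \Vert \nabla f(x)\Vert$, that is, $d(x,S)\le \tfrac{2}{\kappa_f}\Vert \nabla f(x)\Vert$. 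This is exactly the sought estimate, so $(\nabla f)^{-1}$ is $R$-Lipschitz continuous at zero with modulus $c=\tfrac{2}{\kappa_f}$.

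I do not expect a genuine obstacle here: this is the standard passage from a sharpness/growth condition combined with convexity to a linear error bound. The only steps that require mild care are verifying at the outset that $S$ is nonempty and closed, so that $\mathcal{A}(0)$ and the projection ${\rm proj}_S$ are well defined, and separating the degenerate case $x\in S$ before dividing by $\Vert x-x^*\Vert$.
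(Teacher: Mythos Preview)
Your argument is correct and matches the paper's own proof essentially step for step: both sandwich $f(x)-f^*$ between $\tfrac{\kappa_f}{2}\Vert x-x^*\Vert^2$ (from the growth condition) and $\langle \nabla f(x),x-x^*\rangle\le \Vert \nabla f(x)\Vert\,\Vert x-x^*\Vert$ (from convexity and Cauchy--Schwarz), then cancel one factor of $\Vert x-x^*\Vert$ to obtain the modulus $c=2/\kappa_f$. The only difference is that you add explicit remarks on $S\neq\emptyset$ and the degenerate case $x\in S$, which the paper leaves implicit.
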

\begin{proof}
We prove that for given $y\in H$, we have
$$
(\nabla f)^{-1}(y)\subset (\nabla f)^{-1}(0)+\frac{2}{\kappa_f} \Vert y \Vert \ball=S+\frac{2}{\kappa_f} \Vert y \Vert \ball,
$$
where $k_f$ is defined in (\ref{quad}).
Indeed, let $x\in (\nabla f)^{-1}(y)$ and $x^*={\rm proj}_S(x)$. Using (\ref{quad}) and the convexity of $f$, one has
$$
\frac{\kappa_f}{2}\Vert x-x^*\Vert^2\le f(x)-f(x^*)\le \langle \nabla f(x),x-x^* \rangle\le \Vert y \Vert \Vert x-x^*\Vert
$$
and the conclusion follows.\qed 
\end{proof}}
\section{Main result}

In this section, we study the convergence analysis of Tikhonov regularization in solving (\ref{main}). Traditionally,  we only know that the solution $x_{\varepsilon}$ of the regularized  problem  converges to the least-norm solution of (\ref{main}). However, can we estimate how fast {is} this convergence?

When $A$ is a maximal monotone operator with $0\in {\rm dom}({A}^{-1})$ then ${A}^{-1}$ is  {truncately $R$-continuous} at $0$ with some continuity modulus function $\rho$ (Proposition \ref{rela}).  If   {the function} $\rho$ can be estimated then we can answer the question above.

Since $A^{-1}$ is also maximal monotone operator,  $S:=A^{-1}(0)$ is a closed  convex set \cite{Brezis} and thus $\tilde{x}:={\rm proj}_S(0)$ is well-defined.  We have the following results.

\begin{theorem}\label{th1}
Suppose that $A:  {H}\rightrightarrows {H}$ is a maximal monotone operator, $0\in {\rm rge}({A})$ and $A^{-1}$ is {truncately $R$-continuous} at $0$ with modulus function $\rho$. Let  $\varepsilon>0$ small enough (e. g., $\varepsilon\le \frac{\sigma}{\Vert a \Vert}$) and $x_\varepsilon:= (A+\varepsilon Id)^{-1}(0)$, then 
\beq\label{estx}
\Vert x_\varepsilon \Vert \le \Vert \tilde{x} \Vert,
\eeq
and
\beq\label{conver}
 d(x_\varepsilon,S) \le\rho(\varepsilon \Vert \tilde{x} \Vert)=\rho(\varepsilon a)
\eeq
 where $\sigma>0$, $\rho(\cdot)$ are defined in (\ref{rho}), $a:= \Vert \tilde{x} \Vert$ where $ \tilde{x}$ is the least norm element of $S$. In particular the sequence $(\Vert x_\varepsilon \Vert )$ is increasing as $\varepsilon$ decreases.
Furthermore
\beq
\lim_{\varepsilon\to 0} x_\varepsilon =\tilde{x}
\eeq
with the rate 
\beq
\Vert x_\varepsilon-\tilde{x}\Vert\le \rho(\varepsilon a)+\sqrt{\rho^2(\varepsilon a)+2\rho(\varepsilon a)a}.
\eeq
\end{theorem}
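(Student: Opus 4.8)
The plan is to read the regularized inclusion $0\in(A+\varepsilon Id)x_\varepsilon$ as a membership statement for $A^{-1}$ and then combine monotonicity of $A$ with the truncated $R$-continuity of $A^{-1}$. First I would rewrite the inclusion as $-\varepsilon x_\varepsilon\in Ax_\varepsilon$, i.e. $x_\varepsilon\in A^{-1}(-\varepsilon x_\varepsilon)$. Since $\tilde x\in S=A^{-1}(0)$ means $0\in A\tilde x$, applying monotonicity of $A$ to the pairs $(x_\varepsilon,-\varepsilon x_\varepsilon)$ and $(\tilde x,0)$ gives $\langle-\varepsilon x_\varepsilon-0,\,x_\varepsilon-\tilde x\rangle\ge0$, hence $\langle x_\varepsilon,x_\varepsilon-\tilde x\rangle\le0$; Cauchy--Schwarz then yields $\|x_\varepsilon\|^2\le\langle x_\varepsilon,\tilde x\rangle\le\|x_\varepsilon\|\,\|\tilde x\|$, which is the bound (\ref{estx}).

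For (\ref{conver}) I would feed $x_\varepsilon\in A^{-1}(-\varepsilon x_\varepsilon)$ into the defining inclusion (\ref{rho}) for $\mathcal A=A^{-1}$, whose truncation level is exactly $a=\|\tilde x\|$. Bound (\ref{estx}) gives $x_\varepsilon\in a\ball$, while $\|-\varepsilon x_\varepsilon\|=\varepsilon\|x_\varepsilon\|\le\varepsilon a\le\sigma$ under the smallness hypothesis on $\varepsilon$, so $-\varepsilon x_\varepsilon\in\sigma\ball$. Then (\ref{rho}) reads $x_\varepsilon\in S+\rho(\varepsilon\|x_\varepsilon\|)\ball$, and monotonicity of $\rho$ together with $\|x_\varepsilon\|\le a$ upgrades this to $d(x_\varepsilon,S)\le\rho(\varepsilon a)$.

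To see that $\|x_\varepsilon\|$ increases as $\varepsilon$ decreases, I would fix $\varepsilon_1>\varepsilon_2>0$ and apply monotonicity of $A$ to the two solutions, obtaining $\langle-\varepsilon_1 x_{\varepsilon_1}+\varepsilon_2 x_{\varepsilon_2},\,x_{\varepsilon_1}-x_{\varepsilon_2}\rangle\ge0$. Expanding and bounding the cross term by Cauchy--Schwarz reduces this, with $u=\|x_{\varepsilon_1}\|$ and $v=\|x_{\varepsilon_2}\|$, to the factored inequality $(u-v)(\varepsilon_1 u-\varepsilon_2 v)\le0$; assuming $u>v$ would force $\varepsilon_1 u\le\varepsilon_2 v$, contradicting $\varepsilon_1>\varepsilon_2$ and $u>v\ge0$, so $u\le v$.

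The last and most delicate step is the rate, and this is where I expect the only genuine care to be needed. I would set $p_\varepsilon:=\mathrm{proj}_S(x_\varepsilon)$, so that $\|x_\varepsilon-p_\varepsilon\|=d(x_\varepsilon,S)\le\rho(\varepsilon a)$, and split $\|x_\varepsilon-\tilde x\|\le\|x_\varepsilon-p_\varepsilon\|+\|p_\varepsilon-\tilde x\|$ by the triangle inequality. The crucial ingredient is the obtuse-angle characterization of the least-norm element: since $\tilde x=\mathrm{proj}_S(0)$, the projection inequality gives $\langle\tilde x,s\rangle\ge\|\tilde x\|^2=a^2$ for every $s\in S$, in particular for $s=p_\varepsilon$. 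Using $\|p_\varepsilon\|\le\|x_\varepsilon\|+\|x_\varepsilon-p_\varepsilon\|\le a+\rho(\varepsilon a)$ and expanding $\|p_\varepsilon-\tilde x\|^2=\|p_\varepsilon\|^2-2\langle p_\varepsilon,\tilde x\rangle+a^2\le\|p_\varepsilon\|^2-a^2$ gives $\|p_\varepsilon-\tilde x\|\le\sqrt{\rho^2(\varepsilon a)+2a\rho(\varepsilon a)}$, and the stated rate follows. Finally $x_\varepsilon\to\tilde x$ drops out since $\rho(\varepsilon a)\to\rho(0)=0$ as $\varepsilon\to0$.
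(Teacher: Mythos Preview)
Your proof is correct and follows essentially the same route as the paper: monotonicity of $A$ for (\ref{estx}), the truncated $R$-continuity inclusion for (\ref{conver}), and the projection inequality $\|p_\varepsilon-\tilde x\|^2\le\|p_\varepsilon\|^2-\|\tilde x\|^2$ combined with the triangle inequality for the rate. The only minor deviation is in showing that $\|x_\varepsilon\|$ increases as $\varepsilon$ decreases: the paper obtains this in one line by applying (\ref{estx}) with $A$ replaced by $A+\varepsilon Id$ (whose zero set is $\{x_\varepsilon\}$), whereas you argue directly via Cauchy--Schwarz and the factorization $(u-v)(\varepsilon_1 u-\varepsilon_2 v)\le0$ --- both are valid and elementary.
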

\begin{proof}
Note that $x_\varepsilon$ is well-defined since $A+\varepsilon Id$ is strongly monotone.
 We have 
$$
0\in A\tilde{x}\;\;{\rm and}\;\;-\varepsilon x_\varepsilon \in Ax_\epsilon.
$$
The monotonicity of $A$  implies that
\beq\label{ep1}
\varepsilon \langle x_\varepsilon, x_\varepsilon-\tilde{x}\rangle\le 0,
\eeq
and (\ref{estx}) follows. For $\delta>\varepsilon>0$,  replacing $A$ by $A+\varepsilon I$ in the procedure above, we have $\Vert x_\delta\Vert\le \Vert x_\varepsilon\Vert$. Thus the sequence $(\Vert x_\varepsilon \Vert )$ is increasing as $\varepsilon$ decreases and bounded above by $a$.
In addition, we have 
\beq
x_\varepsilon \in A^{-1}(-\varepsilon x_\varepsilon)\subset A^{-1}(0)+\rho(\varepsilon \Vert x_\varepsilon \Vert)\ball\subset  S+\rho(\varepsilon a)\ball.
\eeq
Thus 
 $$d(x_\varepsilon,S) \le\rho(\varepsilon a).$$
Let $y_\varepsilon:={\rm proj}_S(x_\varepsilon)$ then $\Vert x_\varepsilon-y_\varepsilon\Vert \le \rho(\varepsilon a).$ We have 
\baqn
\Vert y_\varepsilon-\tilde{x}\Vert^2&\le& \Vert y_\varepsilon \Vert^2-\Vert\tilde{x}\Vert^2 \;({\rm since\;} \tilde{x}:={\rm proj}_S(0) \;{\rm and}\; y_\varepsilon\in S)\\
&\le& (\Vert  x_\varepsilon \Vert+ \Vert  x_\varepsilon-y_\varepsilon \Vert)^2-a^2\\
&\le& \rho^2(\varepsilon a)+2\rho(\varepsilon a)a.
\eaqn
Thus
$$
\Vert x_\varepsilon-\tilde{x}\Vert\le \Vert x_\varepsilon-y_\varepsilon\Vert+\Vert y_\varepsilon-\tilde{x}\Vert\le \rho(\varepsilon a)+\sqrt{\rho^2(\varepsilon a)+2\rho(\varepsilon a)a}.
$$
\qed
\end{proof}
\begin{remark}
\textit{ The continuity modulus function $\rho$  of $A^{-1}$ at zero plays an essential role in the convergence analysis. The notion of {$R$-continuity} is  also important in DC programming when $A$ is not necessarily monotone as we see in the next section.}
\end{remark}
\section{Application}
\subsection {Convex optimization}
Let us consider the minimization problem of a proper lower semicontinuous convex function $f: H\to \mathbb{R}\cup \{+\infty\}$. Suppose that the  solution set  $S:=\partial f^{-1}(0)$ is non-empty and the minimum value $f^*:=f(x^*)$ for some $x^*\in S$. Let $g=f+\frac{\varepsilon}{2} \Vert \cdot \Vert^2$ for some small  $\varepsilon>0$.  Then $g$ is strongly convex and has a unique minimizer $x_\varepsilon$ which converges to the least-norm solution $\tilde{x}\in S$ of the original problem as $\varepsilon \to 0$ \cite{Tikhonov}.  Obviously solving the strongly convex problem is easier and faster than solving the convex problem. However, it is important to know the estimations of $d(x_\varepsilon, S), \Vert x_\varepsilon-\tilde{x}\Vert$ and $ f(x_\varepsilon)-f^*$ to decide for which function $f$, it is really better to use 
Tikhonov regularization and how to choose a suitable  $\varepsilon$ which should not be too small. 
\begin{theorem}\label{th2}
Suppose that $\partial f^{-1}$ is {truncately $R$-continuous} at zero with  modulus function $\rho$. For $\varepsilon>0$ small enough, let $x_\varepsilon:=(\partial f+\varepsilon I)^{-1}(0)=\partial g^{-1}(0).$ Then we have 
\beq
 f(x_\varepsilon)-f^*\le  \Vert\tilde{x} \Vert \rho(\varepsilon \Vert\tilde{x}\Vert)\varepsilon.
\eeq
\end{theorem}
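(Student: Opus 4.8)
The plan is to combine the first-order optimality condition satisfied by $x_\varepsilon$ with the two quantitative estimates already furnished by Theorem \ref{th1}. First I would record that $x_\varepsilon=(\partial f+\varepsilon I)^{-1}(0)$ is equivalent to the inclusion $-\varepsilon x_\varepsilon\in\partial f(x_\varepsilon)$. Since $\partial f$ is maximal monotone and $0\in{\rm rge}(\partial f)$ (which is exactly the assumption that $S=\partial f^{-1}(0)$ is nonempty), Theorem \ref{th1} applies with $A=\partial f$; in particular it yields both the norm bound $\Vert x_\varepsilon\Vert\le\Vert\tilde x\Vert=a$ from (\ref{estx}) and the distance estimate $d(x_\varepsilon,S)\le\rho(\varepsilon a)$ from (\ref{conver}). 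These are the only facts about the regularized solution I will need.

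The decisive step is to feed the inclusion $-\varepsilon x_\varepsilon\in\partial f(x_\varepsilon)$ into the subgradient inequality, but evaluated at the projection $y_\varepsilon:={\rm proj}_S(x_\varepsilon)$ rather than at $\tilde x$ itself. Because $y_\varepsilon\in S=\partial f^{-1}(0)$, the point $y_\varepsilon$ is a global minimizer of $f$, so $f(y_\varepsilon)=f^*$. Convexity then gives $f^*=f(y_\varepsilon)\ge f(x_\varepsilon)+\langle-\varepsilon x_\varepsilon,\,y_\varepsilon-x_\varepsilon\rangle$, which rearranges to
\beq
f(x_\varepsilon)-f^*\le \varepsilon\,\langle x_\varepsilon,\,y_\varepsilon-x_\varepsilon\rangle.
\eeq

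To finish I would apply the Cauchy--Schwarz inequality to the right-hand side and insert the two bounds from Theorem \ref{th1}: since $\langle x_\varepsilon,\,y_\varepsilon-x_\varepsilon\rangle\le\Vert x_\varepsilon\Vert\,\Vert y_\varepsilon-x_\varepsilon\Vert$, with $\Vert x_\varepsilon\Vert\le a$ and $\Vert y_\varepsilon-x_\varepsilon\Vert=d(x_\varepsilon,S)\le\rho(\varepsilon a)$, we obtain $f(x_\varepsilon)-f^*\le\varepsilon\,a\,\rho(\varepsilon a)=\Vert\tilde x\Vert\,\rho(\varepsilon\Vert\tilde x\Vert)\,\varepsilon$, as claimed.

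The only genuinely delicate point is the choice of the test point in the subgradient inequality. Testing against $\tilde x$ directly is useless here, because (\ref{ep1}) controls $\langle x_\varepsilon,\,x_\varepsilon-\tilde x\rangle$ with the \emph{wrong} sign for an upper bound, and the quantity $\Vert x_\varepsilon-\tilde x\Vert$ is not governed by $\rho(\varepsilon a)$ but only by the coarser rate at the end of Theorem \ref{th1}. Projecting onto $S$ replaces $\Vert x_\varepsilon-\tilde x\Vert$ by the distance $d(x_\varepsilon,S)$, which is precisely the quantity that truncated $R$-continuity bounds by $\rho(\varepsilon a)$; this is what produces the sharp factor $\rho(\varepsilon\Vert\tilde x\Vert)$ rather than a crude $O(\varepsilon)$ estimate.
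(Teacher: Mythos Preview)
Your argument is correct, but the paper takes a different route. Instead of the subgradient inequality, the paper exploits that $x_\varepsilon$ is the minimizer of $g=f+\tfrac{\varepsilon}{2}\Vert\cdot\Vert^2$: comparing $g(\tilde x)\ge g(x_\varepsilon)$ gives $f(x_\varepsilon)-f^*\le\tfrac{\varepsilon}{2}(\Vert\tilde x\Vert^2-\Vert x_\varepsilon\Vert^2)\le\varepsilon\Vert\tilde x\Vert(\Vert\tilde x\Vert-\Vert x_\varepsilon\Vert)$, and then bounds $\Vert\tilde x\Vert-\Vert x_\varepsilon\Vert\le\Vert x^*\Vert-\Vert x_\varepsilon\Vert\le\Vert x^*-x_\varepsilon\Vert\le\rho(\varepsilon a)$ for a nearest point $x^*\in S$. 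Your approach bypasses $g$ entirely and goes straight through the first-order inclusion $-\varepsilon x_\varepsilon\in\partial f(x_\varepsilon)$ tested at the projection $y_\varepsilon$; this is arguably more direct and makes transparent exactly which two ingredients from Theorem~\ref{th1} are needed. The paper's version, on the other hand, uses only the scalar gap $\Vert\tilde x\Vert-\Vert x_\varepsilon\Vert$, so it never needs the full Cauchy--Schwarz step. Both routes consume the same estimates (\ref{estx}) and (\ref{conver}) and land on the identical bound.
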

\begin{proof}
Using Theorem 1, there exists some $x^*\in S$ such that
$$
\Vert x_\varepsilon-x^*\Vert\le \rho(\varepsilon \Vert\tilde{x}\Vert).
$$
 Since $x_\varepsilon$ is the unique minimizer of $g=f+\frac{\varepsilon}{2} \Vert \cdot \Vert^2$, we deduce that
$$
f^*+\frac{\varepsilon}{2} \Vert \tilde{x}\Vert^2=f(\tilde{x})+\frac{\varepsilon}{2} \Vert \tilde{x}\Vert^2\ge  f(x_\varepsilon)+\frac{\varepsilon}{2} \Vert x_\varepsilon\Vert^2.
$$
Therefore
\baqn
 f(x_\varepsilon)-f^*&\le&\frac{\varepsilon}{2} \Vert \tilde{x}\Vert^2-\frac{\varepsilon}{2} \Vert x_\varepsilon\Vert^2\le \varepsilon \Vert \tilde{x}\Vert (\Vert \tilde{x}\Vert-\Vert {x}_\varepsilon\Vert)\\
 &\le&\varepsilon \Vert \tilde{x}\Vert (\Vert {x}^*\Vert-\Vert {x}_\varepsilon\Vert)\le\varepsilon \Vert \tilde{x}\Vert \Vert {x}^*- {x}_\varepsilon\Vert\\
  &\le&\Vert \tilde{x}\Vert \rho(\varepsilon \Vert\tilde{x}\Vert)\varepsilon.
\eaqn
\qed
\end{proof}
The following result follows by Theorem \ref{th1} and \ref{th2}.
\begin{corollary}
Suppose that $\partial f^{-1}$ is {truncately $R$-Lipschitz} at zero with modulus $c>0$. For $\varepsilon>0$ small enough, let $x_\varepsilon:=(\partial f+\varepsilon I)^{-1}(0)=\partial g^{-1}(0).$ Then we have
\baq\label{lips}
\Vert x_\varepsilon-{x}^*\Vert\le c \Vert \tilde{x}\Vert \varepsilon,\;\; \Vert x_\varepsilon-\tilde{x}\Vert\le \Vert \tilde{x}\Vert(c\varepsilon  +\sqrt{c^2\varepsilon^2+2 c\varepsilon}) 
\eaq
for some $x^*\in S$ and 
\beq
 f(x_\varepsilon)-f^*\le c \Vert\tilde{x}\Vert^2\varepsilon^2.
\eeq
\end{corollary}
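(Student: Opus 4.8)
The plan is to obtain all three estimates by specializing the general bounds of Theorems \ref{th1} and \ref{th2} to the Lipschitz modulus. By hypothesis $\partial f^{-1}$ is truncately $R$-Lipschitz continuous at zero with modulus $c$, which by the definition means that the continuity modulus function may be taken to be the linear function $\rho(r)=cr$. Everything then follows by substituting this particular $\rho$ into the previously established inequalities and simplifying; no new analytic input is needed.

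First I would apply Theorem \ref{th1}. Its conclusion $d(x_\varepsilon,S)\le \rho(\varepsilon a)$ with $a=\Vert\tilde{x}\Vert$ becomes $d(x_\varepsilon,S)\le c\varepsilon\Vert\tilde{x}\Vert$. Taking $x^*:={\rm proj}_S(x_\varepsilon)$, which is precisely the point $y_\varepsilon$ appearing in the proof of Theorem \ref{th1}, gives $\Vert x_\varepsilon-x^*\Vert=d(x_\varepsilon,S)\le c\Vert\tilde{x}\Vert\varepsilon$, establishing the first inequality of (\ref{lips}).

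Next, for the convergence rate, I would substitute $\rho(\varepsilon a)=c\varepsilon\Vert\tilde{x}\Vert$ and $a=\Vert\tilde{x}\Vert$ into the rate bound $\Vert x_\varepsilon-\tilde{x}\Vert\le \rho(\varepsilon a)+\sqrt{\rho^2(\varepsilon a)+2\rho(\varepsilon a)a}$ from Theorem \ref{th1}. The quantity under the radical becomes $c^2\varepsilon^2\Vert\tilde{x}\Vert^2+2c\varepsilon\Vert\tilde{x}\Vert^2$, and factoring $\Vert\tilde{x}\Vert^2$ out of the square root yields $\Vert\tilde{x}\Vert\bigl(c\varepsilon+\sqrt{c^2\varepsilon^2+2c\varepsilon}\bigr)$, which is the second inequality of (\ref{lips}).

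Finally, for the functional gap I would invoke Theorem \ref{th2}, whose conclusion $f(x_\varepsilon)-f^*\le \Vert\tilde{x}\Vert\,\rho(\varepsilon\Vert\tilde{x}\Vert)\,\varepsilon$ becomes, upon inserting $\rho(\varepsilon\Vert\tilde{x}\Vert)=c\varepsilon\Vert\tilde{x}\Vert$, the claimed bound $c\Vert\tilde{x}\Vert^2\varepsilon^2$. Since each step is a direct substitution into an already-proven inequality, there is no genuine obstacle here; the only point requiring a little care is the algebraic simplification of the radical term—correctly extracting the common factor $\Vert\tilde{x}\Vert^2$ from under the square root so that the homogeneous form displayed in (\ref{lips}) emerges.
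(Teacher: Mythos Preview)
Your proposal is correct and matches the paper's own approach: the paper simply states that the corollary follows by Theorems \ref{th1} and \ref{th2}, and your argument is exactly the substitution $\rho(r)=cr$ into those results with the appropriate algebraic simplifications.
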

\begin{example}
Let us consider the minimization problem of a quadratic function $f(x)=\frac{1}{2}\langle {B}x, x \rangle-Cx$, where $B\in \mathbb{R}^{n\times n}$ is a symmetric, positive semi-definite matrix, $C$ is some vector  in  $\mathbb{R}^n$. The solution set is given by
$$
S=\{x\in \mathbb{R}^n: Bx-C=0\}.
$$
Suppose that $S\neq \emptyset$. Let  $A(x):=Bx-C, \forall x\in \mathbb{R}^n$ and $x_\varepsilon=(B+\varepsilon Id)^{-1}C$. Then  $A^{-1}$ is {$R$-Lipschitz} continuous at zero with modulus $\frac{1}{k}$, where $k$ is the least positive eigenvalue of $B$ (Proposition \ref{lipm}).
 Thus  $(x_\varepsilon)$ converges to the least norm element $\tilde{x}$ of $S$ and 
 $$
 d(x_\varepsilon,S)\le \frac{ \Vert \tilde{x} \Vert \varepsilon}{k},\;\; f(x_\varepsilon)-f^*\le \frac{ \Vert \tilde{x}\Vert^2\varepsilon^2 }{k}.
 $$

 More specifically, we consider in $ \mathbb{R}^3$ with 
$$
B=\left( \begin{array}{ccc}
22 &\;\; 46 &\;\; 68\\ 
46 & \;\; 97 &\;\; 143 \\ 
68 & \;\; 143 &\;\; 211
\end{array} \right), \;\;C= (318 \;\;669 \;\;987)^T.
$$
Then $B$ is singular and $330$-Lipschitz continuous.
Let $\varepsilon=10^{-5}$ we can compute directly   that $$x_\varepsilon= (B+\varepsilon I)^{-1}(C)\approx(1\; 2\; 3)^T+10^{-5}(-0.2218 \;\;\;0.167\;\;-0.0557)^T.$$  It is easy to check that $\tilde{x}=(1 \; 2\; 3)^T$ is the least-norm solution  of $S$ and
$$
f(x_\varepsilon)-f^*=f(x_\varepsilon)-f(\tilde{x})\approx2.7285 \times10^{-12}.
$$
 If we use  Nesterov's Accelerated Gradient method  \cite{Nesterov}, with the initial point $y_0= (5 \;2\;3)^T$ after $k=10^4$ iterations we can find $x_k\approx(2.3334\;\;3.3333\;1.6667)^T$ with 
$$
f(x_k)-f^*\approx 9.9135\times 10^{-10}.
$$
\end{example}

\begin{example}
Next we want to minimize  a  $L$-smooth convex function $f: H\to \mathbb{R}$, i. e.,  $f$ is convex differentiable and $f'$ is $L$-Lipschitz continuous.
Nesterov's  method  \cite{Nesterov}  can construct a sequence $(x_k)$ such that 
\beq\label{esn}
f(x_k)-f^*\le \frac{2L\Vert x_0-x^*\Vert^2}{(k+2)^2},
\eeq
where $x^*$ is a minimizer of $f$ and $f^*=f(x^*).$
Let $g=f+\frac{\varepsilon}{2}\Vert \cdot \Vert^2$ for some small $\varepsilon>0$ then  $g'=f'+\varepsilon I$ is $\varepsilon$-strongly monotone and $(L+\varepsilon)$-Lipschitz continuous. Another version of Nesterov's  method for strongly convex and smooth functions \cite[p. 94]{Nesterovb} allows us to  construct a sequence $(y_k)$ such that 
\beq\label{strong}
\frac{\varepsilon}{2}\Vert y_k- x_\varepsilon \Vert^2\le g(y_k)-g(x_\varepsilon)\le \frac{L+2\varepsilon}{2}\Vert y_0-x_\varepsilon\Vert^2e^{-k\sqrt{\varepsilon/(L+\varepsilon)}}=:S_k,
\eeq
where  $x_\varepsilon=(f'+\varepsilon I)^{-1}(0)$. Therefore, using Theorem 2, we have 
\baqn
 f(y_k)-f^*&=&g(y_k)-g(x_\varepsilon) - \frac{\varepsilon}{2}(\Vert y_k\Vert^2-\Vert x_\varepsilon\Vert^2)+f(x_\varepsilon)-f^*\\
&\le& g(y_k)-g(x_\varepsilon) +\frac{\varepsilon}{2} (\Vert y_k-x_\varepsilon \Vert^2)+\varepsilon \Vert \tilde{x}\Vert\Vert y_k-x_\varepsilon \Vert+\varepsilon \Vert \tilde{x}\Vert\rho (\varepsilon \Vert \tilde{x}\Vert)\\
&\le& W_k+R(\varepsilon),
\eaqn
where 
$$
W_k:=g(y_k)-g(x_\varepsilon) +\frac{\varepsilon}{2} (\Vert y_k-x_\varepsilon \Vert^2)+\varepsilon \Vert \tilde{x}\Vert\Vert y_k-x_\varepsilon \Vert \;\;{\rm and}\;\;R(\varepsilon):=\varepsilon \Vert \tilde{x}\Vert\rho (\varepsilon \Vert \tilde{x}\Vert).
$$
While $W_k$ tends to zero very fast, the remain term $R(\varepsilon)$ does not depends on $k$. Thus it is easy to see that if $\rho(s)=cs^\alpha$ for some $\alpha>0$ close to zero and $c>0$, it is better to apply directly the  Nesterov's method without using the regularization. However, for big problems when $L$ is very large (e.g., $L\ge 10^{10}$)  if $\rho(s)=cs^\alpha$ with $\alpha\ge 1$ (e. g., in large scale quadratic programming with $\alpha=1$) and an acceptable $\varepsilon$ (e.g., $\varepsilon=10^{-5}$), 
using the regularization can reduce remarkably the iterations. Note that $L$ is  dominated by the exponential term in (\ref{strong}) and $\Vert x_\varepsilon \Vert\le \Vert \tilde{x}\Vert\le \Vert x^* \Vert$.
\end{example}
\subsection{DC programming} 
DC (difference of convex) optimization has the following form 
\beq
\min_{x\in H}f(x)=\min_{x\in H}(g(x)-h(x)),
\eeq 
where $g, h: H\to \mathbb{R}\cup \{+\infty\}$ are proper, convex, lower semicontinuous function. It is an important class of non-convex optimization and covers wide range of applications (see, e. g., \cite{Hare,Horst,Leth,main,Pham} and the references therein). One usually wants to find a critical point $x^*$ of $f$, i. e., $0\in \partial^F f(x^*)\subset \partial g(x^*)-\partial h(x^*)$ whenever $h$ is continuous at $x^*$, where $\partial^F$ denotes the Fr\'echet subdifferential and $\partial$ denotes the subdifferential in the sense of convex analysis. 
Suppose that  $ h$ is differentiable. Then $\partial^F f(x^*)=\partial g(x^*)-\nabla h(x^*)$ (see, e. g., \cite{Leth}). Applying the  \textit{forward-backward}  splitting algorithm \cite{Passty}, we can generate a sequence $(x_k)$ as follows
\beq\label{xk}
x_0\in H, x_{k+1}= J_{\gamma\partial g}L_{-\gamma\nabla h}(x_k),
\eeq
where $J_{\gamma\partial g}$ is the resolvent of $\gamma\partial g$ and $L_{-\gamma\nabla h}:=Id+\gamma \nabla h.$
\begin{theorem}
 Suppose that $f$ is bounded below, $\nabla h$ is single-valued uniformly continuous, $S:=(\partial g-\nabla h)^{-1}(0)\neq \emptyset$ and $(\partial g-\nabla h)^{-1}$ is $R$-continuous at zero with its  modulus function $\rho$.  Let $(x_k)$ be the sequence generated by the  \textit{forward-backward}  splitting algorithm (\ref{xk}). Then 
$$
\lim_{k\to +\infty}{\rm d}(x_{k},S)=0.
$$
\end{theorem}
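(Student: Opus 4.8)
The plan is to combine a sufficient-decrease estimate for the forward-backward iteration with the $R$-continuity of $(\partial g-\nabla h)^{-1}$ at zero. First I would unfold the iteration: by definition of the resolvent, $x_{k+1}=J_{\gamma\partial g}(x_k+\gamma\nabla h(x_k))$ is equivalent to the inclusion $\frac{x_k-x_{k+1}}{\gamma}+\nabla h(x_k)\in\partial g(x_{k+1})$. Feeding this subgradient into the convexity inequality for $g$ at $x_{k+1}$, and separately using the convexity of $h$ to bound $h(x_k)-h(x_{k+1})$ from above by $\langle\nabla h(x_k),x_k-x_{k+1}\rangle$, the forward terms in $\nabla h(x_k)$ cancel and one is left with the descent inequality
\beq
f(x_k)-f(x_{k+1})\ge \frac{1}{\gamma}\Vert x_k-x_{k+1}\Vert^2.
\eeq

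Next I would extract a vanishing-step conclusion. Since $f$ is bounded below and $(f(x_k))$ is non-increasing by the estimate above, the sequence $(f(x_k))$ converges; summing the descent inequality over $k$ yields $\sum_{k}\Vert x_k-x_{k+1}\Vert^2\le \gamma\,(f(x_0)-\inf_H f)<+\infty$, and in particular $\Vert x_k-x_{k+1}\Vert\to 0$.

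For the final step I would exhibit an explicit element of $(\partial g-\nabla h)(x_{k+1})$ whose norm vanishes. Subtracting $\nabla h(x_{k+1})$ from the inclusion above gives
\beq
v_k:=\frac{x_k-x_{k+1}}{\gamma}+\nabla h(x_k)-\nabla h(x_{k+1})\in \partial g(x_{k+1})-\nabla h(x_{k+1})=(\partial g-\nabla h)(x_{k+1}),
\eeq
so that $x_{k+1}\in(\partial g-\nabla h)^{-1}(v_k)$. The term $\frac{1}{\gamma}\Vert x_k-x_{k+1}\Vert$ tends to zero by the previous step, and here the uniform continuity of $\nabla h$ is exactly what is needed: since $\Vert x_k-x_{k+1}\Vert\to 0$, it forces $\Vert\nabla h(x_k)-\nabla h(x_{k+1})\Vert\to 0$ without any boundedness assumption on $(x_k)$. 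Hence $\Vert v_k\Vert\to 0$, so $v_k\in\sigma\ball$ for $k$ large, and the $R$-continuity of $(\partial g-\nabla h)^{-1}$ at zero gives $x_{k+1}\in(\partial g-\nabla h)^{-1}(v_k)\subset S+\rho(\Vert v_k\Vert)\ball$, whence $d(x_{k+1},S)\le\rho(\Vert v_k\Vert)$. Since $\rho(r)\to\rho(0)=0$ as $r\to0^+$, we conclude $\lim_{k\to+\infty}d(x_k,S)=0$.

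The hard part will be establishing the descent inequality cleanly, i.e.\ handling the difference-of-convex structure so that the forward term $\nabla h(x_k)$ cancels correctly; and recognizing that the residual $v_k$ must be measured at $x_{k+1}$ rather than $x_k$, which is why the increment $\nabla h(x_k)-\nabla h(x_{k+1})$ appears and why \emph{uniform} (rather than merely pointwise) continuity of $\nabla h$ is the precise hypothesis that makes $\Vert v_k\Vert\to0$.
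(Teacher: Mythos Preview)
Your proof is correct and follows essentially the same route as the paper: the same descent inequality $f(x_k)-f(x_{k+1})\ge\frac{1}{\gamma}\Vert x_k-x_{k+1}\Vert^2$ via convexity of $g$ and $h$, the same telescoping to force $\Vert x_k-x_{k+1}\Vert\to 0$, and the same residual $v_k\in(\partial g-\nabla h)(x_{k+1})$ combined with $R$-continuity. If anything, you are slightly more explicit than the paper about why \emph{uniform} continuity of $\nabla h$ is the right hypothesis and about eventually having $v_k\in\sigma\ball$.
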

\begin{proof}
We have
\baq\nonumber
&&x_{k+1}=(I+\gamma\partial g)^{-1}(x_k+\gamma \nabla h(x_k))\\\nonumber
 &\Leftrightarrow& x_k+\gamma \nabla h(x_k) \in x_{k+1}+\gamma\partial g(x_{k+1})\\
& \Leftrightarrow& \nabla h(x_k)-\frac{x_{k+1}-x_k}{\gamma}\in \partial g(x_{k+1}).
\label{fa1}
\eaq
On the other hand, since $g$ and $h$ are convex, we have 
\beq\label{fa2}
\langle \nabla h(x_k),x_{k+1}-x_k\rangle\le h(x_{k+1})-h(x_k)
\eeq
and
\beq\label{fa3}
\langle \nabla h(x_k)-\frac{x_{k+1}-x_k}{\gamma},x_{k+1}-x_k\rangle\ge g(x_{k+1})-g(x_k).
\eeq
From   (\ref{fa2}) and  (\ref{fa3}), one has 
\beq
\gamma(f(x_{k+1})-f(x_k))\le -  \Vert x_{k+1}-x_k\Vert^2.
\eeq
Consequently,
$$
\sum_{k=0}^{+\infty}\Vert x_{k+1}-x_k\Vert^2\le \gamma(f(x_0)- f^*)<+\infty,
$$
where $f^*:=\inf_{x\in H}f(x)$. In particular 
\beq
\lim_{k\to+\infty} \Vert x_{k+1}-x_k\Vert=0.
\eeq
From (\ref{fa1}), one has 
$$
\nabla h(x_k)-\nabla h(x_{k+1})-\frac{x_{k+1}-x_k}{\gamma}\in \partial g(x_{k+1})-\nabla h(x_{k+1}).
$$
Let $r_k:=\nabla h(x_k)-\nabla h(x_{k+1})-\frac{x_{k+1}-x_k}{\gamma}$. Then $r_k\to 0$ as $k\to+\infty$ and 
$$
x_{k+1}\in  (\partial g - \nabla h)^{-1}(r_k)\subset (\partial g - \nabla h)^{-1}(0) +\rho(\Vert r_k \Vert).
$$
Thus 
$$
\lim_{k\to +\infty}{ d}(x_{k+1},S)=0. 
$$
\qed
\end{proof}
\begin{remark}
$i)$   We  recall the well-known DCA (see, e. g. \cite{Pham})
 as follows
$$y_k\in \partial h(x_k), \;x_{k+1}\in \partial g^*(y_k)={\rm argmin} \{g(x)-[h(x_k) +\langle y_k,x-x_k \rangle] \}.$$ 
It is easy to see that DCA applied to the new couple $(\gamma g+\frac{1}{2}\Vert \cdot \Vert^2, \gamma h+\frac{1}{2}\Vert \cdot \Vert^2)$ is  the \textit{forward-backward}  splitting algorithm.  We can see that the form of \textit{forward-backward}  splitting algorithm and its convergence analysis are simpler  since we don't need  the estimations through the conjugate functions. 

ii) Theorem 3 provides a new  condition such that  $d(x_k,S)\to 0$ as $k\to +\infty$. For example, if $g=g_1+I_K$ where $g_1$ is a differentiable function and $K$ is a  convex compact set (particularly in quadratic programming over convex compact sets) then $(\partial g-\nabla h)^{-1}$ is  {$R$-continuous} at zero.
\end{remark}
\begin{corollary}\label{exam}
 Suppose that $f$ is bounded below, $\nabla h$ is single-valued uniformly continuous, $S:=(\partial g-\nabla h)^{-1}(0)\neq \emptyset$, $g=g_1+I_K$ where $g_1: H\to \mathbb{R}$ is a convex differentiable function with full domain and $K$ is a convex compact set.  Let $(x_k)$ be the sequence generated by the  \textit{forward-backward}  splitting algorithm (\ref{xk}). Then 
$$
\lim_{k\to +\infty}{\rm d}(x_{k},S)=0.
$$
\end{corollary}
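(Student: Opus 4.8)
The plan is to recognize this statement as a direct specialization of the previous theorem: the only hypothesis there that is not already present in the corollary is the $R$-continuity of $(\partial g-\nabla h)^{-1}$ at zero. So the whole task reduces to verifying that the structural assumption $g=g_1+I_K$, with $g_1$ convex differentiable of full domain and $K$ convex compact, forces this $R$-continuity; once that is established, the conclusion $\lim_{k\to+\infty}{\rm d}(x_k,S)=0$ follows immediately from the previous theorem with the same $\rho$.

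To obtain the $R$-continuity I would appeal to Proposition \ref{compact} together with the remark following it, which states that $\mathcal{A}:=A^{-1}$ has closed graph with compact range precisely when $A$ has closed graph with compact domain. Setting $A:=\partial g-\nabla h$, I first compute ${\rm dom}(A)$. Since $g_1$ is convex, finite and continuous on all of $H$, the subdifferential sum rule gives $\partial g=\nabla g_1+N_K$, so $\partial g(x)\neq\emptyset$ exactly when $N_K(x)\neq\emptyset$, i.e. when $x\in K$. Because $\nabla h$ is single-valued with full domain, this yields ${\rm dom}(A)={\rm dom}(\partial g)=K$, which is compact by hypothesis.

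Next I would check that $A$ has closed graph. Take $x_n\to x$ and $y_n\to y$ with $y_n\in A(x_n)$, that is $y_n+\nabla h(x_n)\in\partial g(x_n)$. Uniform continuity of $\nabla h$ gives $\nabla h(x_n)\to\nabla h(x)$, hence $y_n+\nabla h(x_n)\to y+\nabla h(x)$ strongly, in particular weakly. As $g$ is proper convex lower semicontinuous, $\partial g$ is maximal monotone and its graph is strongly--weakly closed by Proposition \ref{strwe}; therefore $y+\nabla h(x)\in\partial g(x)$, i.e. $y\in A(x)$. Thus $A$ has closed graph with compact domain, so by the remark $\mathcal{A}=(\partial g-\nabla h)^{-1}$ has closed graph with compact range, and $0\in{\rm dom}(\mathcal{A})$ because $S\neq\emptyset$. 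Proposition \ref{compact} then delivers the $R$-continuity of $(\partial g-\nabla h)^{-1}$ at zero, and the previous theorem applies.

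I expect the main obstacle to be the closed-graph verification for the \emph{non-monotone} operator $A=\partial g-\nabla h$: subtracting $\nabla h$ destroys monotonicity, so Proposition \ref{strwe} cannot be applied to $A$ itself and must instead be invoked for $\partial g$ alone, with the continuity of $\nabla h$ used to transfer the convergence of $y_n$ onto the monotone piece. A secondary point deserving care is the justification of the sum rule $\partial g=\nabla g_1+N_K$, valid here precisely because $g_1$ is finite and continuous on all of $H$, since it is this identity that pins ${\rm dom}(A)$ down to the compact set $K$ and thereby supplies the compact range required by Proposition \ref{compact}.
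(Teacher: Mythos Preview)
Your proposal is correct and follows exactly the same route as the paper: reduce to the previous theorem by verifying $R$-continuity of $(\partial g-\nabla h)^{-1}$ at zero via Proposition~\ref{compact}, using that $\mathcal{A}$ has closed graph and compact range. The paper states this in one line (``It is easy to see $\mathcal{A}$ has closed graph and compact range''), whereas you spell out the sum rule $\partial g=\nabla g_1+N_K$ to identify the domain as $K$ and carry out the closed-graph check by passing the continuity of $\nabla h$ through to the monotone piece $\partial g$; these details are precisely what the paper leaves implicit.
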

\begin{proof}
It remains to prove that $\mathcal{A}:=(\partial g-\nabla h)^{-1}$ is {$R$-continuous} at zero. It is easy to see $\mathcal{A}$ has closed graph and compact range. Using  Proposition \ref{compact}, we obtain the conclusion. \qed
\end{proof}
\section{Conclusion}\label{sec13}
We believe that Tikhonov regularization has been widely used in convex optimization. The paper provides a rigorous convergence analysis with explicit estimations using the notion of   {$R$-continuity}. Consequently, it may help us to decide when we should use  Tikhonov regularization instead of the direct method. The  {$R$-continuity} of a set-valued mapping  also play an important role in DC programming. \\

%
%



\end{document}